\documentclass[11pt]{amsart}

\usepackage[T1]{fontenc}
\usepackage[utf8]{inputenc}
\usepackage{geometry}
\usepackage{amssymb}
\usepackage{amsmath}
\usepackage{mathtools}
\usepackage{xcolor}
\usepackage[colorlinks=true,citecolor=red,linkcolor=blue,urlcolor=red]{hyperref}
\usepackage{soul}

\DeclareMathOperator{\Pic}{Pic}
\DeclareMathOperator{\Nef}{Nef}
\DeclareMathOperator{\Eff}{\overline{Eff}}
\DeclareMathOperator{\Sym}{Sym}
\DeclareMathOperator{\Proj}{Proj}
\DeclareMathOperator{\rank}{rank}

\newcommand{\PP}{\mathbb{P}}
\newcommand{\RR}{\mathbb{R}}
\newcommand{\ZZ}{\mathbb{Z}}
\newcommand{\Oo}{\mathcal{O}}
\newcommand{\Ee}{\mathcal{E}}

\newtheorem{thm}{Theorem}[section]
\newtheorem{lem}[thm]{Lemma}
\newtheorem{prop}[thm]{Proposition}

\theoremstyle{definition}

\newtheorem{ques}[thm]{Question}
\newtheorem{rem}[thm]{Remark}

\numberwithin{equation}{section}

\begin{document}

\title[Slope instability and \texorpdfstring{$1$}{1}-homogeneity on a surface]{A slope-unstable bundle on a surface with $1$-homogeneous projectivization}

\author{Jihao Liu}
\address{Department of Mathematics, Peking University, No. 5 Yiheyuan Road, Haidian District, Beijing 100871, China}
\address{Beijing International Center for Mathematical Research, Peking University, No. 5 Yiheyuan Road, Haidian District, Beijing 100871, China}
\email{liujihao@math.pku.edu.cn}

\subjclass[2020]{14J60, 14C20, 14E30, 14M99}
\keywords{Slope semistability, vanishing discriminant, projective bundle, nef cone, pseudo-effective cone, $1$-homogeneous variety}
\date{}

\begin{abstract}
We construct a slope-unstable vector bundle on $\PP^2$ whose projectivization is $1$-homogeneous, as asked in a question of Fulger and Langer on the interplay between slope semistability and positivity for projective bundles with vanishing discriminant.  The main result of this paper was obtained by Chatgpt 5.5 pro, and the Danus system based on the Rethlas system.
\end{abstract}

\maketitle

\section{Introduction}\label{sec:introduction}

We work over the field $\mathbb C$ of complex numbers. For a projective variety $Y$, let $N^1(Y)_{\RR}$ denote the real N\'eron--Severi space of numerical divisor classes, let $\Nef(Y)\subseteq N^1(Y)_{\RR}$ be the cone of nef classes, and let $\Eff(Y)\subseteq N^1(Y)_{\RR}$ be the pseudo-effective cone, that is, the closure of the cone generated by the classes of effective divisors. One always has $\Nef(Y)\subseteq\Eff(Y)$.

We use the terminology of Fulger and Langer \cite[Definition~4.1]{FL22}. A projective variety $Y$ is called \emph{$1$-homogeneous} if
\begin{equation}\label{eq:1homog-def}
\Nef(Y)=\Eff(Y).
\end{equation}
Recall that a vector bundle $\Ee$ on a polarized variety $(X,H)$ is \emph{slope-unstable} (with respect to $H$) if it admits a nonzero proper subbundle $\mathcal F$ with $\mu_H(\mathcal F)>\mu_H(\Ee)$, where $\mu_H(\mathcal G)=(c_1(\mathcal G)\cdot H^{\dim X-1})/\rank(\mathcal G)$ denotes the $H$-slope. 

In their study of the relationship between positivity and slope semistability for vector bundles with vanishing discriminant, Fulger and Langer established the following result in every characteristic. We note that the characteristic $0$ case is established by \cite[Theorem~1.2]{Mis21}. 

\begin{thm}[{\cite[Theorem~4.3]{FL22}}]
Let $X$ be a smooth projective variety defined over an algebraically closed field $k$ and
let $\mathcal{E}$ be a strongly slope semistable bundle with respect to some ample polarization of $X$. Let us also
assume that the discriminant satisfies $\Delta(\mathcal{E}) \equiv 0$. Then the following conditions are equivalent:

(1) $X$ is $1$-homogeneous,

(2) $c_1(\pi_*\mathcal{O}_{\mathbb{P}(\mathcal{E})}(D))$ is nef for every effective divisor $D$ on $P(\mathcal{E})$,

(3) $P(\mathcal{E})$ is $1$-homogeneous.
\end{thm}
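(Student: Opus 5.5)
We prove the equivalence along the cycle $(1)\Rightarrow(2)\Rightarrow(3)\Rightarrow(1)$. Throughout, $\pi\colon P=\PP(\Ee)\to X$ is the projection, $\xi=\Oo_{\PP(\Ee)}(1)$, and $r=\rank\Ee$. We use the normalized class $\zeta=\xi-\frac1r\pi^*c_1(\Ee)$. Since $N^1(P)_\RR=\RR\zeta\oplus\pi^*N^1(X)_\RR$, every class can be written $a\zeta+\pi^*\delta$.

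The basic input is the classical fact for strongly semistable bundles with $\Delta(\Ee)\equiv0$: $\zeta$ is nef on $P$. In characteristic $0$ this is Nakayama/Miyaoka; in characteristic $p$ one uses strong semistability and Frobenius pullbacks. It follows that $\Nef(P)$ contains $\RR_{\ge0}\zeta+\pi^*\Nef(X)$. Conversely, nef classes restrict to nef classes on fibres, and pulling back along sections over curves shows $\Nef(P)=\RR_{\ge0}\zeta+\pi^*\Nef(X)$. For this last step we use that $\zeta$ has vanishing top self-intersection on suitable curves, which comes from $\Delta\equiv0$.

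\textbf{(3)$\Rightarrow$(1).} Let $D$ be a pseudo-effective class on $X$. Then $\pi^*D$ is pseudo-effective on $P$, hence nef by (3). Restricting to a multisection, or simply using $\pi^*D\cdot(\text{curves in }P)$ together with the surjectivity of $\pi_*$ on curve classes, shows that $D$ is nef on $X$.

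\textbf{(1)$\Rightarrow$(2).} Let $D\sim a\xi+\pi^*B$ be effective with $a\ge0$. Then $\pi_*\Oo(D)=S^a\Ee\otimes\Oo(B)$ has a nonzero section. Since $\Ee$ is strongly semistable, $S^a\Ee$ is semistable (strongly so, in positive characteristic, by the tensor-product theorem). Strongly semistable bundles with vanishing discriminant have a filtration with numerically projectively flat graded pieces, and this gives $\Delta(S^a\Ee)\equiv0$. A nonzero section of $S^a\Ee(B)$ then gives $\Oo\to S^a\Ee(B)$, and we compare slopes. The clean route is to note that $c_1(S^a\Ee(B))$ is a positive multiple of the class $\pi_*(D\cdot\zeta^{r-1})$-type pushforward, which is pseudo-effective because $\zeta$ is nef and $D$ is effective. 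By (1) this pushforward is nef.

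\textbf{(2)$\Rightarrow$(3).} Let $D$ be effective, written $D\equiv a\zeta+\pi^*\delta$ with $a\ge0$. Then $c_1(\pi_*\Oo(D))=\binom{a+r-1}{r-1}\,\delta$ up to a positive multiple, because the normalized symmetric power has $c_1=0$. Hence (2) says that $\delta$ is nef. So $D\in\RR_{\ge0}\zeta+\pi^*\Nef(X)\subseteq\Nef(P)$, and taking closures gives $\Eff(P)\subseteq\Nef(P)$.

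The main obstacle is the nefness of $\zeta$ and the control of $c_1$ of pushforwards in positive characteristic. There one must use strong semistability, since Frobenius pullbacks are needed to keep the symmetric powers semistable with vanishing discriminant. I would handle this through the characterization: strongly semistable with $\Delta\equiv0$ if and only if $\zeta$ restricted to every curve is nef.
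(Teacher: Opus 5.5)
The paper does not prove this statement. It quotes it from \cite[Theorem~4.3]{FL22}, with the characteristic-$0$ case attributed to \cite{Mis21}. The only related argument in the paper is the two-ray argument of Proposition~\ref{prop:1homog}, which is special to Picard number two and uses no semistability. So there is no in-paper proof to compare with.

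Judged on its own, your cycle of implications is sound once nefness of $\zeta$ is granted: Nakayama in characteristic $0$, and in characteristic $p$ Langer's theorem that strongly semistable sheaves with $\Delta\cdot H^{n-2}=0$ restrict to strongly semistable bundles on curves. That is the only place the hypotheses enter. (3)$\Rightarrow$(1) holds for every bundle, and (1)$\Rightarrow$(2) and (2)$\Rightarrow$(3) use nothing beyond nefness of $\zeta$.

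Two things need tightening.

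First, in (1)$\Rightarrow$(2), drop the slope comparison. A section of $S^a\Ee(B)$ only gives $\delta\cdot H^{n-1}\ge 0$ for polarizations $H$, i.e.\ positivity on complete-intersection curves, and that does not imply pseudo-effectivity. Your ``clean route'' is the right argument, and it is an exact identity rather than a ``type''. From $\pi_*\xi^{r-1}=1$ and $\pi_*\xi^r=c_1(\Ee)$ you get $\pi_*\zeta^{r-1}=1$ and $\pi_*\zeta^r=0$. Hence for $D\equiv a\zeta+\pi^*\delta$,
\[
\pi_*(D\cdot\zeta^{r-1})=\delta,\qquad c_1(\pi_*\Oo(D))=\binom{a+r-1}{r-1}\,\pi_*(D\cdot\zeta^{r-1}).
\]
The class $D\cdot\zeta^{r-1}$ is pseudo-effective: $\zeta$ is a limit of ample $\mathbb{Q}$-classes, and cutting an effective cycle by general members of very ample systems works in any characteristic. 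So Frobenius pullbacks, semistability of $S^a\Ee$ and $\Delta(S^a\Ee)$ play no role here, contrary to your closing paragraph. The same identity also gives $\Eff(P)=\RR_{\ge0}\zeta+\pi^*\Eff(X)$.

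Second, the reverse inclusion $\Nef(P)\subseteq\RR_{\ge0}\zeta+\pi^*\Nef(X)$ is never used in your cycle, and the reason you give for it is misplaced. The self-intersection $\zeta^r$ vanishes on $\PP(\Ee|_C)$ for every bundle, since $\pi_*\zeta^r=0$; this has nothing to do with $\Delta\equiv 0$. What is actually needed is nefness of $\zeta$, after which
\[
\delta\cdot C=(a\zeta+\pi^*\delta)\cdot\zeta^{r-1}\cdot[\pi^{-1}C]\ge 0 .
\]
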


It is natural to ask whether this kind of result holds without the slope stability condition. 
To answer this question, Fulger and Langer constructed a three-dimensional base, namely $\PP^3$, carrying a slope-unstable bundle whose projectivization is $1$-homogeneous in \cite[Example~4.7]{FL22}, and asked whether the same phenomenon can occur over a surface. 

The following question is the second part of \cite[Question~2]{FL22}.

\begin{ques}[{\cite[Question~2(2)]{FL22}}]\label{ques:fl}
Does there exist a complex projective surface $X$ supporting a slope-unstable vector bundle $\Ee$ such that $\PP(\Ee)$ is $1$-homogeneous?
\end{ques}

The purpose of this note is to present the explicit example in Theorem~\ref{thm:main}.

\begin{thm}\label{thm:main}
Let $X=\PP^2_{\mathbb C}$, let $H=c_1(\Oo_{\PP^2}(1))$ be the hyperplane class, and let
\[
\Ee=T_{\PP^2}(-1)\oplus\Oo_{\PP^2}.
\]
Then $\Ee$ is a rank-three vector bundle on $X$ that is slope-unstable with respect to $H$, while its projectivization $\PP(\Ee)$ is $1$-homogeneous. In particular, Question~\ref{ques:fl} has an affirmative answer.
\end{thm}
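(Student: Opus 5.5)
The plan is to verify the two claims separately. Slope-instability is a direct slope computation. For $1$-homogeneity, I will realize $\PP(\Ee)$ explicitly as a divisor in $\PP^2\times\PP^3$. Its Picard number is then two, and both cones are spanned by the same two nef, non-big classes. Throughout I use the Grothendieck convention, so $\PP(\Ee)$ parametrizes rank-one quotients. Nothing changes in substance with the other convention, since $T_{\PP^2}(-1)^\vee\cong T_{\PP^2}(-2)$ has the same shape up to twist.

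\emph{Step 1: instability.} We have $c_1(T_{\PP^2}(-1))=H$ and $\rank T_{\PP^2}(-1)=2$, so $\mu_H(T_{\PP^2}(-1))=1/2$. On the other hand, $c_1(\Ee)=H$ and $\rank\Ee=3$, so $\mu_H(\Ee)=1/3$. Hence the direct summand $T_{\PP^2}(-1)\subset\Ee$ destabilizes $\Ee$.

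\emph{Step 2: an explicit model.} Write $\PP^2=\PP(V)$ as the space of lines in $V\cong\mathbb C^3$. The Euler sequence gives
\[
0\to\Oo_{\PP^2}(-1)\to V\otimes\Oo_{\PP^2}\to T_{\PP^2}(-1)\to 0 .
\]
Adding a trivial summand, $\Ee$ becomes the quotient of $W\otimes\Oo_{\PP^2}$ by $\Oo_{\PP^2}(-1)$, where $W=V\oplus\mathbb C$. So $\Ee$ is globally generated, and this quotient map gives a closed embedding $\PP(\Ee)\subset \PP^2\times\PP^3$, with $\PP^3$ the space of hyperplanes of $W$. A rank-one quotient of $W$ at the point $x=[x_0:x_1:x_2]$ is a linear form $(a,b,c,d)$ on $W$ that vanishes on the tautological line $x\subset V$. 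Hence
\[
\PP(\Ee)=\{(x,[a:b:c:d]) : a x_0+b x_1+c x_2=0\},
\]
a smooth divisor of bidegree $(1,1)$. Moreover $\Oo_{\PP(\Ee)}(1)$ is the restriction of $\Oo_{\PP^3}(1)$. Since $\dim\PP(\Ee)=4\ge 3$, the Lefschetz hyperplane theorem (or directly $\Pic\PP(\Ee)=\pi^*\Pic\PP^2\oplus\ZZ\,\xi$) gives $N^1(\PP(\Ee))_\RR=\RR H_1\oplus\RR H_2$. Here $H_1,H_2$ are the pullbacks of the hyperplane classes under the two projections $p_1,p_2$.

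\emph{Step 3: both cones equal $\langle H_1,H_2\rangle$.} Both $H_1$ and $H_2$ are nef, being pullbacks of ample classes. I will show that neither is big.

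For $H_1$ this is clear, since $p_1$ maps onto the surface $\PP^2$.

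For $H_2$, the map $p_2\colon\PP(\Ee)\to\PP^3$ is surjective. Over $[a:b:c:d]$ with $(a,b,c)\neq 0$ the fibre is a line in $\PP^2$, and over $[0:0:0:1]$ it is all of $\PP^2$. So the image has dimension $3<4$, hence $H_2^4=0$ and $H_2$ is not big.

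In a two-dimensional $N^1$, a nef class that is not big lies on the boundary of $\Eff$: if $H_1=D+\epsilon(\text{ample})$ with $D$ pseudo-effective, bigness of $H_1$ would follow. Thus the rays $\RR_{\ge0}H_1$ and $\RR_{\ge0}H_2$ are boundary rays of both $\Nef$ and $\Eff$. Since they are distinct, they cut out the same two-dimensional cone, so
\[
\Nef(\PP(\Ee))=\Eff(\PP(\Ee))=\RR_{\ge0}H_1+\RR_{\ge0}H_2 ,
\]
which is exactly \eqref{eq:1homog-def}.

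The main obstacle I expect is Step 2: getting the projectivization convention and the embedding right, and checking carefully that $\Oo_{\PP(\Ee)}(1)$ is the pullback from $\PP^3$. After that, everything reduces to the easy observation that $p_2$ is not generically finite.
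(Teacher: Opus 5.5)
Your argument is correct and is essentially the paper's proof: Picard number two, $A=\pi^*H$ and $B=c_1(\Oo_Y(1))$ both nef with $A^4=B^4=0$, and the two-ray argument. Here $B$ is pulled back along the morphism $Y\to\PP^3$ given by the four generating sections, and your bidegree-$(1,1)$ equation is just an explicit description of that morphism.

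One correction to an aside your proof does not actually use: the projectivization convention does matter. With the subspace convention you would be projectivizing $\Ee^\vee=\Omega_{\PP^2}(1)\oplus\Oo\cong\bigl(T_{\PP^2}(-1)\oplus\Oo(1)\bigr)\otimes\Oo(-1)$, and twisting only one summand is not harmless. On $\PP\bigl(T_{\PP^2}(-1)\oplus\Oo(1)\bigr)$ the tautological class is nef with $\xi^4=c_1^2-c_2=4-2=2>0$, so it lies in the interior of $\Eff$. It is not ample, because $T_{\PP^2}(-1)|_\ell\cong\Oo_\ell\oplus\Oo_\ell(1)$ on a line $\ell$, so it lies on the boundary of $\Nef$. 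Hence that fourfold is not $1$-homogeneous, and the Grothendieck convention fixed in the paper is essential.
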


\begin{rem}\label{rem:sarkar}
After the first version of this note appeared, Supravat Sarkar kindly pointed out that the argument of \cite[Theorems~7.3 and~7.4(i)]{BSV24} gives further examples. Let $\mathcal M=M_{\Oo_{\PP^2}(2)}$ denote the syzygy bundle defined by
\[
0\to \mathcal M
\to H^0(\PP^2,\Oo_{\PP^2}(2))\otimes\Oo_{\PP^2}
\to \Oo_{\PP^2}(2)\to 0.
\]
Then $\Ee'=\Oo_{\PP^2}\oplus\mathcal M^\vee$ is slope-unstable with respect to $H$, whereas $\PP(\Ee')$ is $1$-homogeneous. Indeed, $\rank(\mathcal M^\vee)=5$ and $c_1(\mathcal M^\vee)=2H$, so the summand $\mathcal M^\vee$ has slope $2/5>1/3=\mu_H(\Ee')$; the $1$-homogeneity follows from the cited results and the two-ray argument used below.
\end{rem}

Throughout, we use the Grothendieck convention $\PP_X(\Ee)=\Proj_X(\Sym\Ee)$, so that $\PP_X(\Ee)$ parametrizes one-dimensional quotient spaces of the fibers of $\Ee$ and carries the tautological quotient line bundle $\Oo_{\PP_X(\Ee)}(1)$.

\subsection*{Outline of the argument}
The two halves of Theorem~\ref{thm:main} are independent and elementary. The slope computation rests on the twisted Euler sequence on $\PP^2$, which identifies $T_{\PP^2}(-1)$ as a rank-two quotient of $\Oo_{\PP^2}^{\oplus 3}$ with first Chern class $H$; the destabilizing subbundle is the summand $T_{\PP^2}(-1)$ itself. The $1$-homogeneity is forced by a two-ray cone argument: the fourfold $Y=\PP(\Ee)$ has Picard number two, and the two natural divisor classes $A=\pi^*H$ and $B=c_1(\Oo_Y(1))$ are linearly independent, nef, and non-big, so they span both the nef cone and the pseudo-effective cone. This two-ray cone argument is mentioned in \cite[Remark 4.2]{FL22}.

\begin{rem}
The sketch of the proof of the main result of this paper was obtained by Chatgpt 5.5 pro, and later summed up, verified, and properly written by the Danus system, a specialized agent built on Rethlas and substantially more capable for fundamental mathematical research based on the Rethlas system. Human verification and polishing were done afterwards. See \cite{Ju26} for a detailed introduction to the Rethlas system. Due to the limitation of automated systems, it is possible that we have missed some related references in the literature, and we welcome any comments from experts.
\end{rem}

\subsection*{Acknowledgements}
The author was partially supported by the National Key R\&D Program of China \#\allowbreak 2024YFA1014400.
The author would like to thank the Rethlas team, namely Haocheng Ju, Jiedong Jiang, Shurui Liu, Guoxiong Gao, Yuefeng Wang, Zeming Sun, Bin Wu, Liang Xiao, and Bin Dong, for their contributions to the development of Rethlas and its customized version used for the problem studied in this paper. The author would like to thank Fulin Xu for verifying an earlier draft of this paper and providing many useful comments and suggestions. The author would like to thank Ruochuan Liu and Gang Tian for constant support and encouragement.
The author would also like to thank Supravat Sarkar for a friendly discussion and for pointing out the further example recorded in Remark~\ref{rem:sarkar}.

\section{Slope instability}\label{sec:slope}

In this section, we prove the slope instability of the bundle in Theorem~\ref{thm:main}.

Let $X=\PP^2_{\mathbb C}$, let $\Oo_X(1)$ be the hyperplane line bundle, and let $H=c_1(\Oo_X(1))$. We write $T_X$ for the tangent bundle of $X$ and set
\[
\mathcal F=T_X(-1)=T_X\otimes\Oo_X(-1),\qquad \Ee=\mathcal F\oplus\Oo_X.
\]
On $\PP^2$ one has $H^2=1$.

\begin{lem}\label{lem:chern}
The bundle $\mathcal F$ has rank $2$ and first Chern class $c_1(\mathcal F)=H$. Consequently $\Ee$ has rank $3$ and first Chern class $c_1(\Ee)=H$.
\end{lem}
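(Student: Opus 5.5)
We compute the Chern data of $\mathcal F$ from the Euler sequence on $X=\PP^2$. Twisting the Euler sequence $0\to\Oo_X\to\Oo_X(1)^{\oplus 3}\to T_X\to 0$ by $\Oo_X(-1)$ gives the twisted Euler sequence
\[
0\to\Oo_X(-1)\to\Oo_X^{\oplus 3}\to T_X(-1)\to 0 .
\]
This exhibits $\mathcal F=T_X(-1)$ as a rank-two quotient of the trivial bundle $\Oo_X^{\oplus 3}$. In particular $\mathcal F$ is locally free, since it is a twist of the tangent bundle of a smooth surface.

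The rank follows from additivity of rank in exact sequences: $\rank\mathcal F=3-1=2$. Equivalently, $\rank\mathcal F=\rank T_X=\dim X=2$. For the first Chern class we use additivity of $c_1$ in short exact sequences (Whitney formula), which gives
\[
c_1(\mathcal F)=c_1(\Oo_X^{\oplus 3})-c_1(\Oo_X(-1))=0-(-H)=H .
\]
As a cross-check, $c_1(T_X)=3H$ from the untwisted Euler sequence. The twisting formula $c_1(V\otimes L)=c_1(V)+\rank(V)\,c_1(L)$ then yields $c_1(T_X(-1))=3H-2H=H$.

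For $\Ee=\mathcal F\oplus\Oo_X$, rank is additive over direct sums, so $\rank\Ee=2+1=3$. The Whitney formula also applies to the split sequence, so $c_1(\Ee)=c_1(\mathcal F)+c_1(\Oo_X)=H+0=H$.

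No step is a genuine obstacle. The only point needing care is the sign of $c_1(\Oo_X(-1))=-H$ in the Whitney computation, and the independent twisting computation guards against an error there.
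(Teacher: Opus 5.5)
Your proof is correct and follows the paper's argument: the twisted Euler sequence $0\to\Oo_X(-1)\to\Oo_X^{\oplus 3}\to\mathcal F\to 0$ gives $\rank\mathcal F=2$ and, via the Whitney formula, $c_1(\mathcal F)=H$, and the summand $\Oo_X$ then contributes rank $1$ and $c_1=0$ for $\Ee$. The only difference is that the paper computes the full total Chern class $c(\mathcal F)=1/(1-H)=1+H+H^2$ and reads off the degree-one part, whereas you apply additivity of $c_1$ directly and add a twisting cross-check.
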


\begin{proof}
The Euler sequence on $\PP^2$, twisted by $\Oo_X(-1)$, is the short exact sequence
\begin{equation}\label{eq:euler}
0\longrightarrow \Oo_X(-1)\longrightarrow \Oo_X^{\oplus 3}\longrightarrow \mathcal F\longrightarrow 0.
\end{equation}
Hence $\mathcal F$ has rank $3-1=2$, and its total Chern class is
\[
c(\mathcal F)=\frac{c\bigl(\Oo_X^{\oplus 3}\bigr)}{c\bigl(\Oo_X(-1)\bigr)}=\frac{1}{1-H}=1+H+H^2
\]
in the Chow ring of $\PP^2$. In particular $c_1(\mathcal F)=H$. Since $\Ee=\mathcal F\oplus\Oo_X$ and $\Oo_X$ has rank $1$ and trivial first Chern class, $\Ee$ has rank $3$ and $c_1(\Ee)=c_1(\mathcal F)=H$.
\end{proof}

\begin{prop}\label{prop:unstable}
The bundle $\Ee$ is slope-unstable with respect to $H$.
\end{prop}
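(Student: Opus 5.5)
The plan is to exhibit the summand $\mathcal F=T_X(-1)$ as a destabilizing subbundle and compare slopes directly using Lemma~\ref{lem:chern}. First, $\mathcal F$ is a subbundle of $\Ee=\mathcal F\oplus\Oo_X$ via the inclusion into the first factor. This inclusion is split, so the quotient is the line bundle $\Oo_X$, which is locally free. Hence $\mathcal F$ is a genuine subbundle. It is nonzero and proper, since its rank $2$ is strictly between $0$ and $3$.

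Second, we compute slopes with respect to $H$. Here $\dim X=2$ and $H^2=1$. By Lemma~\ref{lem:chern}, $c_1(\mathcal F)=H$ and $\rank\mathcal F=2$, so
\[
\mu_H(\mathcal F)=\frac{H\cdot H}{2}=\frac12 .
\]
The same lemma gives $c_1(\Ee)=H$ and $\rank\Ee=3$, so
\[
\mu_H(\Ee)=\frac{H\cdot H}{3}=\frac13 .
\]
Since $\tfrac12>\tfrac13$, the subbundle $\mathcal F$ violates the semistability inequality. Therefore $\Ee$ is slope-unstable with respect to $H$.

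There is no real obstacle here, because all the Chern class input is already contained in Lemma~\ref{lem:chern}. The only point to state explicitly is that the destabilizing object is an honest subbundle with locally free quotient, which the direct sum decomposition guarantees. As a cross-check, one can also note that the quotient $\Oo_X$ has slope $0<\tfrac13$, which is consistent with instability.
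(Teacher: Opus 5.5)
Your proof is correct and follows the paper's argument exactly: you take the direct summand $\mathcal F=T_X(-1)$ as the destabilizing subbundle, and Lemma~\ref{lem:chern} together with $H^2=1$ gives $\mu_H(\mathcal F)=\tfrac12>\tfrac13=\mu_H(\Ee)$. Your remark that the quotient $\Oo_X$ is locally free, so that $\mathcal F$ is an honest subbundle, is a harmless clarification that the paper leaves implicit.
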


\begin{proof}
By Lemma~\ref{lem:chern} and the equality $H^2=1$, the $H$-slope of $\mathcal F$ is
\[
\mu_H(\mathcal F)=\frac{c_1(\mathcal F)\cdot H}{\rank(\mathcal F)}=\frac{H^2}{2}=\frac{1}{2},
\]
while the $H$-slope of $\Ee$ is
\[
\mu_H(\Ee)=\frac{c_1(\Ee)\cdot H}{\rank(\Ee)}=\frac{H^2}{3}=\frac{1}{3}.
\]
The inclusion of the first direct summand exhibits $\mathcal F$ as a nonzero proper subbundle of $\Ee$, and $\mu_H(\mathcal F)=\tfrac12>\tfrac13=\mu_H(\Ee)$. Hence $\Ee$ is slope-unstable with respect to $H$.
\end{proof}

\section{The two boundary classes}\label{sec:classes}

In this section we set up the geometry of $Y=\PP_X(\Ee)$ and identify two distinguished divisor classes on it.

Keep the notation of Section~\ref{sec:slope}, and set
\[
Y=\PP_X(\Ee),\qquad \pi\colon Y\to X
\]
for the projective bundle and its structure morphism. Let $\Oo_Y(1)$ be the tautological quotient line bundle on $Y$, and put
\[
A=\pi^*H\in N^1(Y)_{\RR},\qquad B=c_1(\Oo_Y(1))\in N^1(Y)_{\RR}.
\]
Since $\rank(\Ee)=3$ and $\dim X=2$, the variety $Y$ has dimension
\begin{equation}\label{eq:dimY}
\dim Y=\dim X+\rank(\Ee)-1=2+3-1=4.
\end{equation}

\begin{lem}\label{lem:rho}
The real N\'eron--Severi space $N^1(Y)_{\RR}$ is two-dimensional and is spanned by $A$ and $B$, which are linearly independent.
\end{lem}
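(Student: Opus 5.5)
The plan is to use the standard description of the Picard group of a projective bundle and then pass to numerical classes. For a vector bundle $\Ee$ of rank $r$ on a smooth projective variety $X$, one has
\[
\Pic(\PP_X(\Ee))\cong \pi^*\Pic(X)\oplus\ZZ\cdot c_1(\Oo_Y(1)).
\]
This is proved by restricting to fibres, where $\Oo_Y(1)$ restricts to $\Oo_{\PP^{r-1}}(1)$, and then using $\pi_*\Oo_Y=\Oo_X$ together with the projection formula: a line bundle that is trivial on every fibre descends from $X$. Since $\Pic(\PP^2)=\ZZ\cdot\Oo(1)$, this gives $\Pic(Y)=\ZZ A\oplus\ZZ B$. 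In particular $A$ and $B$ span $N^1(Y)_{\RR}$, because $N^1$ is a quotient of $\Pic\otimes\RR$.

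It remains to show that $A$ and $B$ are numerically independent, so that $N^1(Y)_{\RR}$ has dimension exactly two. I would do this with two test curves.

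\textbf{First test curve.} Let $\ell\subset\pi^{-1}(x)\cong\PP^2$ be a line in a fibre. Then $A\cdot\ell=0$, since $\ell$ is contracted by $\pi$, and $B\cdot\ell=1$.

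\textbf{Second test curve.} Use the quotient $\Ee\to\Oo_X$, namely the projection onto the second summand. It defines a section $\sigma\colon X\to Y$ with $\sigma^*\Oo_Y(1)=\Oo_X$. Let $C=\sigma(L)$ for a line $L\subset X$. Then $A\cdot C=H\cdot L=1$ and $B\cdot C=0$.

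Now suppose $aA+bB\equiv 0$. Intersecting with $\ell$ gives $b=0$, and intersecting with $C$ gives $a=0$. Hence $A$ and $B$ are linearly independent in $N^1(Y)_{\RR}$.

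Alternatively, independence follows from the Chow ring relation $B^3-AB^2+c_2(\Ee)AB\cdot(\ldots)=0$ together with $A^3=0$ and $A^2B^2=1$. The curve argument avoids these computations.

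The only step requiring care is the Picard group formula for projective bundles, and I would simply cite it (e.g. Hartshorne, Ex.~II.7.9) rather than reprove it. Everything else is routine bookkeeping with the Grothendieck convention: a quotient $\Ee\to\mathcal L$ corresponds to a section $\sigma$ with $\sigma^*\Oo_Y(1)=\mathcal L$.
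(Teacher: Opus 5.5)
Your proof is correct and is essentially the paper's argument: the same formula $\Pic(Y)=\pi^*\Pic(X)\oplus\ZZ[\Oo_Y(1)]$ gives spanning, and the same line in a fibre shows $B$ is not a multiple of $A$. The only difference is how you rule out $A\equiv 0$: you use a second test curve, the image $\sigma(L)$ of a line under the section defined by the quotient $\Ee\to\Oo_X$, whereas the paper observes that $A$ is the pullback of an ample class under the surjection $\pi$ (your Chow-ring aside is garbled---here the relation is $B^3-AB^2+A^2B=0$---but your argument does not use it).
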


\begin{proof}
The Picard group of a projective bundle is given by
\[
\Pic(Y)=\pi^*\Pic(X)\oplus \ZZ\,[\Oo_Y(1)].
\]
Since $\Pic(\PP^2)$ has rank $1$, the group $\Pic(Y)$ has rank $2$, and therefore $N^1(Y)_{\RR}$ is 
spanned by $A$ and $B$. 

To see that $A$ and $B$ are linearly independent in $N^1(Y)_{\RR}$, let $L\cong\PP^1$ be a line contained in a fiber $\pi^{-1}(x)$. Then $A\cdot L=(\pi^*H)\cdot L=0$ because $\pi$ contracts $L$ to the point $x$, while $B\cdot L=c_1(\Oo_Y(1))\cdot L=1$ since $\Oo_Y(1)$ restricts to the hyperplane bundle on the fiber. Hence $B$ is not a real multiple of $A$. Also $A$ is not numerically equivalent to $0$ because it is a pullback of an ample divisor via a surjective morphism.

 Since $N^1(Y)_{\RR}$ is 
spanned by $A$ and $B$, which are linearly independent, $N^1(Y)_{\RR}$ is two-dimensional. 
\end{proof}

\begin{lem}\label{lem:A}
The class $A$ is nef and not big.
\end{lem}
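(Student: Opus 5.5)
Since $A=\pi^*H$ is the pullback of a divisor class under the morphism $\pi\colon Y\to X$, both properties can be read off on $X$.

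\emph{Nefness.} Let $C\subseteq Y$ be an irreducible curve. The projection formula gives
\[
A\cdot C=\pi^*H\cdot C=H\cdot \pi_*C .
\]
If $\pi$ contracts $C$ to a point, then $\pi_*C=0$ and $A\cdot C=0$. Otherwise $\pi_*C=d\,\pi(C)$, where $d>0$ and $\pi(C)$ is a curve on $\PP^2$. Since $H$ is ample, $H\cdot\pi(C)>0$. In both cases $A\cdot C\ge 0$, so $A$ is nef. Equivalently, the pullback of a globally generated line bundle is globally generated, hence nef.

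\emph{Non-bigness.} A nef class on the fourfold $Y$ (see \eqref{eq:dimY}) is big if and only if its top self-intersection is positive. So it suffices to compute
\[
A^4=\pi^*(H^4).
\]
Here $H^4$ is a class of codimension $4$ on the surface $X$, so $H^4=0$ already in the Chow ring of $X$. Hence $A^4=0$ and $A$ is not big.

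One could avoid the bigness criterion for nef divisors and argue directly instead. By the projection formula,
\[
h^0\bigl(Y,\Oo_Y(mA)\bigr)=h^0\bigl(X,\Oo_X(m)\otimes\pi_*\Oo_Y\bigr)=h^0\bigl(\PP^2,\Oo(m)\bigr),
\]
using $\pi_*\Oo_Y=\Oo_X$. The right-hand side grows like $m^2$, not $m^4$, so $A$ has Iitaka dimension $2<4$ and is not big. There is no real obstacle here. The only point to state carefully is which characterization of bigness is used; the section-growth argument is the cleanest, since it needs no input beyond the fact that $\pi_*\Oo_Y=\Oo_X$.
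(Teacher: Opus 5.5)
Your proposal is correct and follows the paper's proof. Nefness holds because $A=\pi^*H$ is the pullback of an ample (hence nef) class; you verify this curve-by-curve via the projection formula, while the paper simply cites it. Non-bigness follows from $A^4=\pi^*(H^4)=0$ together with the criterion that a nef class is big if and only if its top self-intersection is positive. Your supplementary section count, $h^0(Y,\Oo_Y(mA))=h^0(\PP^2,\Oo(m))$ via $\pi_*\Oo_Y=\Oo_X$, is also valid and avoids that criterion, but it is an optional extra rather than a genuinely different route.
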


\begin{proof}
The class $A=\pi^*H$ is the pullback of the ample, hence nef, class $H$ on $X$, and the pullback of a nef class under a morphism is nef. Thus $A$ is nef.

By \eqref{eq:dimY} we have $\dim Y=4$, and
\[
A^4=(\pi^*H)^4=\pi^*(H^4)=0,
\]
because $H^4=0$ already on the surface $X$ (indeed $H^k=0$ for $k>2$). For a nef divisor class on a projective variety, bigness is equivalent to having positive top self-intersection number. Since $A^4=0$, the class $A$ is not big.
\end{proof}

\begin{lem}\label{lem:B}
The class $B$ is nef and not big.
\end{lem}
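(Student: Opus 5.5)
Nefness of $B$ follows from global generation. The twisted Euler sequence \eqref{eq:euler} exhibits $\mathcal F$ as a quotient of $\Oo_X^{\oplus 3}$. Hence $\Ee=\mathcal F\oplus\Oo_X$ is a quotient of $\Oo_X^{\oplus 4}$, so $\Ee$ is globally generated. With the Grothendieck convention, the surjection $\Oo_X^{\oplus 4}\to\Ee$ induces a surjection $\Oo_Y^{\oplus 4}=\pi^*\Oo_X^{\oplus 4}\to\pi^*\Ee\to\Oo_Y(1)$. Thus $\Oo_Y(1)$ is globally generated, and in particular $B$ is nef. Concretely, this gives a morphism $\varphi\colon Y\to\PP^3$ with $\varphi^*\Oo_{\PP^3}(1)=\Oo_Y(1)$.

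For non-bigness, by the criterion already used in the proof of Lemma~\ref{lem:A}, it suffices to show $B^4=0$. I would compute this using the Grothendieck relation on $Y$:
\[
B^3-\pi^*c_1(\Ee)\,B^2+\pi^*c_2(\Ee)\,B-\pi^*c_3(\Ee)=0,
\]
normalized so that $\pi_*B^2=[X]$, i.e. $B^2\cdot\pi^*(\text{pt})=1$. By Lemma~\ref{lem:chern}, $c(\Ee)=c(\mathcal F)=1+H+H^2$, so $c_1=H$, $c_2=H^2$ (the class of a point), and $c_3=0$. Multiplying the relation by $B$ gives
\[
B^4=A\,B^3-A^2B^2.
\]
Multiplying the relation instead by $A$ and using $A^3=0$ gives
\[
AB^3=A^2B^2=H^2\cdot\pi_*B^2=1.
\]
Hence $B^4=1-1=0$. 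I should double-check the sign convention, since for quotient conventions the Segre-class identity $\pi_*B^{r-1+i}=s_i$ holds with the appropriate sign. Equivalently, I can use $\pi_*B^4=s_2(\Ee)$, where $s(\Ee)=c(\Ee)^{-1}=1-H$ because $c(\Ee)=1/(1-H)$. This gives $s_2=0$ and so $B^4=0$, consistent with the computation above.

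The same vanishing can be seen geometrically, which serves as a sanity check. The map $\varphi$ is induced by $H^0(\Ee)$, and $\Ee^\vee=\Omega_X(1)\oplus\Oo_X$ has $c(\Ee^\vee)=1-H+H^2$. The fibers of $\Ee^\vee$ sit inside the $4$-dimensional space $H^0(\Ee)^\vee$. The projectivized union of these fibers lies in the hyperplane spanned by $\PP(\text{fiber of }\Oo_X)$ together with the images of the $\Omega_X(1)_x$. Each fiber of $\Omega_X(1)$ embeds as a $2$-plane in $H^0(\mathcal F)^\vee\cong\mathbb C^3$, and these planes sweep out only a $2$-dimensional family of lines/planes. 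So $\varphi$ fails to be generically finite onto $\PP^3$ at degree level; concretely $\deg\varphi\cdot\deg\varphi(Y)=B^4$. A cleaner way to say this: $\varphi$ cannot be surjective generically finite, since $\dim Y=4>3$. That alone already implies $B^4=0$, because $B^4=\varphi^*(h^4)=0$ with $h^4=0$ on $\PP^3$.

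This last observation is the simplest route, and I will use it as the main argument: $B=\varphi^*h$ with $h$ the hyperplane class on $\PP^3$, so $B^4=\varphi^*(h^4)=0$. The Chern class computation then serves only as a cross-check. The only real obstacle is making sure the global generation of $\Oo_Y(1)$ yields a morphism to $\PP^3$, i.e. to $\PP(H^0)$ of dimension $3$. This holds because $H^0(\Ee)=H^0(\mathcal F)\oplus H^0(\Oo_X)$ and the generating map uses only four sections.
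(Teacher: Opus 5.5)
Your main argument is exactly the paper's proof: the surjection $\Oo_X^{\oplus 4}\to\Ee$ gives $\varphi\colon Y\to\PP^3$ with $B=\varphi^*h$, so $B$ is nef and $B^4=\varphi^*(h^4)=0$; your Chern-class cross-check via the Grothendieck relation (equivalently $\pi_*B^4=c_1(\Ee)^2-c_2(\Ee)=0$) is also correct. The only slip is in the geometric aside, which your proof does not rely on. The union of the planes $\PP(\Ee^\vee_x)$ is not contained in a hyperplane, because the four sections form a basis of $H^0(\Oo_Y(1))=H^0(\Ee)$; in fact $\varphi$ is surjective onto $\PP^3$ with general fiber $\PP^1$.
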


\begin{proof}
By the twisted Euler sequence \eqref{eq:euler}, the bundle $\mathcal F$ is generated by the three global sections coming from $\Oo_X^{\oplus 3}$. The structure sheaf $\Oo_X$ is generated by one global section. Therefore $\Ee=\mathcal F\oplus\Oo_X$ is generated by four global sections; equivalently, there is a surjection
\[
\Oo_X^{\oplus 4}\longrightarrow \Ee.
\]
In the Grothendieck convention, this surjection induces a morphism
\[
\varphi\colon Y=\PP_X(\Ee)\longrightarrow \PP^3
\]
with $\Oo_Y(1)=\varphi^*\Oo_{\PP^3}(1)$. Hence $B=\varphi^*c_1(\Oo_{\PP^3}(1))$ is the pullback of the ample, hence nef, hyperplane class of $\PP^3$, and is therefore nef.

The image of $\varphi$ is a subvariety of $\PP^3$, so it has dimension at most $3$, while $\dim Y=4$ by \eqref{eq:dimY}. Consequently
\[
B^4=\varphi^*\bigl(c_1(\Oo_{\PP^3}(1))^4\bigr)=0,
\]
since $c_1(\Oo_{\PP^3}(1))^4=0$ on the threefold $\PP^3$. As $B$ is nef and $B^4=0$, the class $B$ is not big.
\end{proof}

\section{Proof of Theorem~\ref{thm:main}}\label{sec:proof}

We now combine the results of the previous sections.

\begin{prop}\label{prop:1homog}
With the notation of Section~\ref{sec:classes}, the variety $Y=\PP_X(\Ee)$ is $1$-homogeneous, and
\begin{equation}\label{eq:cones}
\Nef(Y)=\Eff(Y)=\RR_{\geq 0}\,A+\RR_{\geq 0}\,B.
\end{equation}
\end{prop}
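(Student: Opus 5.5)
The plan is the two-ray cone argument. By Lemma~\ref{lem:rho}, $N^1(Y)_{\RR}$ is a plane with basis $A,B$. Both $\Nef(Y)$ and $\Eff(Y)$ are closed convex cones in this plane. Since $Y$ is projective, $\Eff(Y)$ contains an ample class, so it is full-dimensional. It is also strictly convex (pointed), because pairing with $h^{3}$ for an ample class $h$ is strictly positive on nonzero pseudo-effective classes. A full-dimensional pointed closed cone in $\RR^2$ is spanned by exactly two boundary rays. The same holds for $\Nef(Y)$, which contains the ample cone and sits inside $\Eff(Y)$.

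Step one: show $\RR_{\ge0}A+\RR_{\ge0}B\subseteq\Nef(Y)\subseteq\Eff(Y)$. This follows from Lemmas~\ref{lem:A} and~\ref{lem:B}, since $A$ and $B$ are nef and $\Nef(Y)$ is a convex cone.

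Step two: show that $A$ and $B$ lie on the boundary of $\Eff(Y)$. The interior of the pseudo-effective cone is the big cone. By Lemmas~\ref{lem:A} and~\ref{lem:B}, neither $A$ nor $B$ is big, so both lie on $\partial\Eff(Y)$. They are linearly independent by Lemma~\ref{lem:rho}, so they span two distinct rays.

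Step three: conclude. A two-dimensional pointed closed convex cone has exactly two boundary rays. Since $\RR_{\ge0}A$ and $\RR_{\ge0}B$ are two distinct boundary rays of $\Eff(Y)$, they are its extremal rays, and $\Eff(Y)=\RR_{\ge0}A+\RR_{\ge0}B$. Combined with step one, this squeezes $\Nef(Y)$ between two equal cones, which gives \eqref{eq:cones} and hence $1$-homogeneity.

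The only delicate point is the convex-geometry step. One must make sure that a boundary ray of a two-dimensional cone is necessarily an extremal ray, and that the two rays are not the same ray or opposite rays. Opposite rays are excluded because both $A$ and $B$ lie in the pointed cone $\Eff(Y)$, and $A$ and $B$ are linearly independent. Concretely, I would write any class as $aA+bB$. I would then show that if $a<0$ (or $b<0$) the class cannot be pseudo-effective. Otherwise some ample class $\epsilon$-close to $A$ or $B$, written as a positive combination involving that class, would put $A$ or $B$ in the interior of $\Eff(Y)$, contradicting non-bigness.
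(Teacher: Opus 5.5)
Your proposal is correct and follows essentially the same two-ray argument as the paper: $A$ and $B$ are nef, hence lie in $\Eff(Y)$; they are non-big, hence lie on its boundary; being linearly independent they span its two boundary rays; and $\Nef(Y)$ is then squeezed between $\RR_{\geq 0}A+\RR_{\geq 0}B$ and $\Eff(Y)$. Your separate verification of pointedness is harmless but not strictly needed, since two linearly independent boundary classes already rule out the cases where $\Eff(Y)$ is a half-plane or the whole plane.
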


\begin{proof}
By Lemma~\ref{lem:rho} the space $N^1(Y)_{\RR}$ is two-dimensional, spanned by the linearly independent classes $A$ and $B$. By Lemmas~\ref{lem:A} and~\ref{lem:B}, both $A$ and $B$ are nef and not big.

Every nef class is pseudo-effective: a nef class is a limit of ample classes, and ample classes are big, hence pseudo-effective; passing to the limit keeps the class in the closed cone $\Eff(Y)$. Thus $A$ and $B$ are nonzero classes lying in $\Eff(Y)$. The big cone is the interior of $\Eff(Y)$, so, since $A$ and $B$ are not big, they lie on the boundary of $\Eff(Y)$.

The cone $\Eff(Y)$ is a closed convex salient cone with nonempty interior in the two-dimensional vector space $N^1(Y)_{\RR}$. In a two-dimensional space, such a cone has exactly two boundary rays, and these rays are determined by any two linearly independent boundary classes. As $A$ and $B$ are linearly independent boundary classes, the two boundary rays of $\Eff(Y)$ are $\RR_{\geq 0}\,A$ and $\RR_{\geq 0}\,B$, and therefore
\[
\Eff(Y)=\RR_{\geq 0}\,A+\RR_{\geq 0}\,B.
\]

Since $A$ and $B$ are nef and $\Nef(Y)$ is a convex cone, the span $\RR_{\geq 0}\,A+\RR_{\geq 0}\,B$ is contained in $\Nef(Y)$. Conversely, $\Nef(Y)\subseteq\Eff(Y)$ for any projective variety. Combining these inclusions with the previous paragraph gives
\[
\RR_{\geq 0}\,A+\RR_{\geq 0}\,B\subseteq\Nef(Y)\subseteq\Eff(Y)=\RR_{\geq 0}\,A+\RR_{\geq 0}\,B,
\]
so all three cones coincide. This proves \eqref{eq:cones}, and in particular $\Nef(Y)=\Eff(Y)$, so $Y$ is $1$-homogeneous.
\end{proof}

\begin{proof}[Proof of Theorem~\ref{thm:main}]
By Lemma~\ref{lem:chern}, the bundle $\Ee=T_{\PP^2}(-1)\oplus\Oo_{\PP^2}$ has rank $3$. By Proposition~\ref{prop:unstable}, it is slope-unstable with respect to $H$. By Proposition~\ref{prop:1homog}, its projectivization $\PP(\Ee)$ is $1$-homogeneous. Taking $X=\PP^2_{\mathbb C}$ and this $\Ee$ therefore answers Question~\ref{ques:fl} affirmatively.
\end{proof}

\end{document}